\documentclass[11pt,a4paper,reqno]{amsart}
\usepackage{tikz}
\usepackage{amsmath,bm,bbm} 
\usepackage{amssymb}
\usepackage{fullpage}
\usepackage{color}
\usepackage{animate}
\usepackage{etoolbox}
\usepackage{ulem}
\usepackage{comment}
\usepackage{pgf}
\usetikzlibrary{calc}
\usetikzlibrary{patterns}
\usetikzlibrary{arrows}
\usetikzlibrary{decorations.pathreplacing}
\usepackage[utf8]{inputenc}
\usepackage{pgfplots}
\normalem

\usepackage{hyperref}

\normalem

\definecolor{wiasblue}   {cmyk}{1.0, 0.60, 0, 0}

\def\Z{\mathbb{Z}}
\def\E{\mathbb{E}}
\def\P{\mathbb{P}}

\def\mc{\mathcal}
\def\ms{\mathsf}

\def\one{\mathbbm{1}}
\def\e{\varepsilon}
\def\t{\tau}
\def\fmv{v^{\ms{max}}}
\def\aa{a}
\def\a{\beta}
\def\g{\gamma}
\def\EE{\mc E }
\def\FF{\mc F }

\def\mx{\ms{max}}
\def\NN{\mc N}

\newtheorem{theorem}{Theorem}[section]

\newtheorem{lemma}[theorem]{Lemma}

\theoremstyle{definition}

\keywords{random tree; reinforcement; neural network; small-world graph}
\subjclass[2010]{60K35; 82C22}

\begin{document}
\author{Markus Heydenreich}
\author{Christian Hirsch}
\address[Markus Heydenreich]{Mathematisches Institut, Ludwig-Maximilians-Universit\"at M\"unchen, Theresienstra\ss e 39, 80333 M\"unchen, Germany}
\email{m.heydenreich@lmu.de}
\address[Christian Hirsch]{Institut f\"ur Mathematik, Universit\"at Mannheim, B6, 26, 68161 Mannheim, Germany.}
\email{hirsch@uni-mannheim.de}

\thanks{This work is supported by The Danish Council for Independent Research | Natural Sciences, grant DFF -- 7014-00074 \emph{Statistics for point processes in space and beyond}, and by the \emph{Centre for Stochastic Geometry and Advanced Bioimaging}, funded by grant 8721 from the Villum Foundation.}

\title[A spatial small-world graph from activity-based reinforcement]{A spatial small-world graph arising from activity-based reinforcement}

\date{\today}

\begin{abstract}
	        In the classical preferential attachment model, links form instantly to newly arriving nodes and do not change over time. We propose a hierarchical random graph model in a spatial setting, where such a time-variability arises from an activity-based reinforcement mechanism. We show that the reinforcement mechanism converges, and prove rigorously that the resulting random graph exhibits the small-world property. A further motivation for this random graph stems from modeling synaptic plasticity.
\end{abstract}

\maketitle


\section{Introduction}
\label{introSec}

\paragraph{\bf Network formation driven by reinforcement.}
Since the introduction of the \emph{preferential attachment} model by Barab\'asi and Albert \cite{BarabasiAlbert99}, reinforcement mechanisms are recognized as a versatile tool in network formation. 
Why are preferential attachment models so popular? On the one hand, the resulting graphs exhibit universal features that are ubiquitous in real networks e.g., scale-free property, short distances \cite{BarabasiAlbert99,BollobasRiordan04}. In spatial versions of the preferential attachment mechanism, there is even strong local clustering \cite{JacobMoerters13}. 
A second reason for the popularity lies in the plausibility of the reinforcement scheme: When new agents enter the system, then they are more likely to link with highly connected agents than with those that have only few connections. The result is that a high degree is reinforced, some authors coin this the ``Matthew effect''. 
Even though the preferential attachment model is in principal a dynamical model, the formation of edges occurs instantly, and is not changed with time, except for the addition of edges from new vertices. Variability in the formation of edges is thus not included in the preferential attachment model.  

Reinforcement effects are also typical for social sciences. 
Pemantle and Skyrms \cite{SkyrmsPemantle04,SkyrmsPemantle00} study a mathematical model for a group of agents interacting with each other in such a way that every interaction makes the same interaction in the future more probable. 
Of particular interest is the long-term behavior: both on finite graphs \cite{BonacichLiggett03} and on infinite networks \cite{LiggettRolles04} a nice characterization of the
equilibrium states can be given: The reinforcement in the model is so strong that in the long run there is a formation of groups such that only the agents inside the groups interact
but not across the groups. More precisely, it is shown that in an extremal equilibrium, the 
set of agents decomposes into finite sets, each of which includes a ``center'' that is always chosen by the other agents in that set. 

\paragraph{\bf Neural networks.}
Reinforcement mechanisms are also typical for neural networks in the context of synaptic plasticity. 
To this end, we are considering a fairly simplistic model of a neural network: 
There is a set of neurons, each of them equipped with one axon and a number of dendrites which are connected to axons of other neurons. Pairs of axons and dendrites may form synapses, which are functional connections between neurons. 
However, not all geometric connections necessarily also form functional connections. The resulting network can be interpreted as a directed graph with neurons as vertices and synapses as edges (directed from dendrite to axon). 

Experimental observation shows that the resulting neural 
network is rather sparse and very well connected, that is, any pair of neurons is connected through a short chain of neural connections reminiscent of the ``small-world property''. 
These features allow for very fast and efficient signal processing. 
The challenge is to explain the mechanism behind the 
formation of such sophisticated neural networks. 
Kalisman, Silberberg, and Markram \cite{kalisman-silberberg-markram-2005} 
use experimental evidence to advocate a \emph{tabula rasa approach} to the formation 
of these networks: In an early stage, there is a (theoretical) 
all-to-all geometrical connectivity. Stimulation and transmission 
of signals enhance certain touches to ultimately form functional 
connections, which results in a network with rather few actual synapses. 
This describes the plasticity of the brain at an early stage of the development. 

\paragraph{\bf A mathematical model.} 
In order to model these effects mathematically, we consider a model of reinforced P\'olya urns with graph-based competition. 
In this P\'olya urn interpretation, the ``color'' of the balls in the P\'olya urn represents the edges in a given graph (namely, the potential connections or touches). 

This reinforcement scheme goes as follows: we start from a very large graph (e.g.\ the complete graph or a suitable grid), and initially equip all edges with weight one. The vertices are activated uniformly at random. If node $v$ is activated at time $t\ge0$, then it chooses a neighboring node $w$ proportional to 
\[ W_{t-}(v,w)^\a \qquad \a>0, \]
where $W_{t-}(v,w)$ is the weight of the edge $(v,w)$ just \emph{before} the activation, and the weight of the chosen edge increases by one. High weight of an edge thus means that this edge is chosen very often. We are interested in the subgraph formed by those edges, whose weight is increasing linearly in time (i.e., edges that are chosen a positive fraction of time). Following the neural interpretation of the previous paragraph, these are the edges forming actual synapses. 

The parameter $\a>0$ controls the strength of the reinforcement, we distinguish between \emph{weak} reinforcement when $\a<1$ and \emph{strong} reinforcement when $\a>1$. In the case of strong reinforcement, it appears that any stable equilibrium is concentrated on small ``islands'' which are not connected to each other; for rigorous results in this direction we refer to 
\cite{warm3,warm1,warm2}. On the other hand, if there is weak reinforcement, then all edges are contained in the limiting distribution, and thus no interesting subgraph is shaping in the limit \cite{warm4}. In the bordercase $\a=1$, there is linear reinforcement, where the classical  P\'olya urn (properly normalized) converges to a Dirichlet distribution. For our model of graph based interaction, the situation is more delicate, as it seems that the behavior for $\a=1$ resembles the subcritical regime \cite{warm5}. 

We summarize that these conventional approaches yield interesting results, but they are not versatile enough to support the tabula rasa approach from a mathematical point of view: either the resulting functional connections form small local islands, or the entire network is kept in the limit. 
One might argue that our interpretation of neural interactions with reinforced P\'olya urns is far too simplified. Indeed, there are more realistic mathematical models for brain activities (e.g.\ through a system of interacting Hawkes processes \cite{DelattreFournierHoffmann16}), but we do not expect that the overall picture as described above is changing in more sophisticated setup. 
Instead, we are proposing a different route, where we introduce layers of neurons with varying fitness, and this leads indeed to an interesting network structure.

\paragraph{\bf Our contribution.} 
In the present work, we are suggesting a new model for a network arising from reinforcement dynamics that are typical for the brain. 
Our model is built upon layers of spatial graphs, and the ability of neurons to form long connections. More precisely, in the base network of possible links, neurons at a higher layer have the potential of reaching further than neurons at lower layers, and a random fitness of neurons leads to a rapid coalescence of functional connections. We prove that the resulting graph is connected and loop-free, and that far-away vertices are linked through a few edges only (``small-world''). In contrast to the preferential attachment model, which is based on reinforcement of degrees, our model reinforces edge activities.

Our main interest lies in the understanding of a versatile mathematical model for neural applications. It is clear that the actual formation of the brain involves much more complex processes that are beyond the scope of a rigorous treatment. 
Yet, we aim at clarifying which network characteristics can be explained by a simple reinforcement scheme, and which cannot.

The generality of our approach has the potential to be applied in a variety of contexts with different interpretations. Indeed, networks based on layered graphs are fundamental objects in machine learning, and therefore our model could contribute towards enhancing the understanding of ``biologically plausible deep learning'' in the spirit of \cite{BengioLBL15}.

\paragraph{\bf Future work.} 
In the current model, the growing range of neurons at higher layers is defined externally. It appears desirable to extend the model such that this feature emerges from an intrinsic mechanism of self-organization. Even though in the present setting, we are deriving our results for layers of one-dimensional graphs, we expect that the main results also hold for higher-dimensional lattices. For example, when modelling the neurons in the visual pathway, layers of two-dimensional graphs seems more appropriate. Finally, it would be of interest to test the relevance of the proposed model with measurements in real world networks. 


\section{Model and Results}

We consider a stochastic process of dynamically evolving edge weights $\{W_t(e)\}_{{t \ge 0, e \in E}}$ on the graph with nodes $V = \Z \times \Z_{\ge0}$ and edges $E$ given by pairs $((k, h), (\ell, h + 1))$ for $|\ell - k| \le \aa^h$ for some $\aa > 1$. Here, we think of $\Z \times \Z_{\ge 0}$ as an infinite number of layers, each consisting of infinitely many nodes. Additionally, the nodes feature iid heavy-tailed fitnesses $\{F_v\}_{v \in V}$ with tail index $\g < 1$. More precisely, we assume that $s^{-\g}\P(F_v > s)$ remains bounded away from 0 and $\infty$ as $s \to \infty$.

At time $t = 0$, all edge weights are constant equal to 1, i.e., $W_0(e) = 1$ for every $e \in E$. To describe the evolution of $\{W_t\}_{t \ge 0}$, we equip the nodes of $V$ with independent Poisson clocks. When at node $v = (k, h)$ the clock rings, then we choose one of the adjacent nodes $w$ in the set $\NN_v = \{(\ell, h + 1):\,|\ell - k| \le \aa^h\}$ of out-neighbors and increment the weight of the incident edge by 1. According to the modeling paradigm described in Section \ref{introSec}, we prefer to choose fitter vertices and higher edge weights. 
More precisely, the probability to select $w = (\ell, h + 1)$ is proportional to 
$$F_wW_{t-}(v, w)^\a,$$
where the parameter $\a > 1$ describes the strength of the reinforcement bias. Figure \ref{fig} illustrates the random graph model after a finite number of reinforcement steps.

\begin{figure}[!bp]
	\centering
		\input{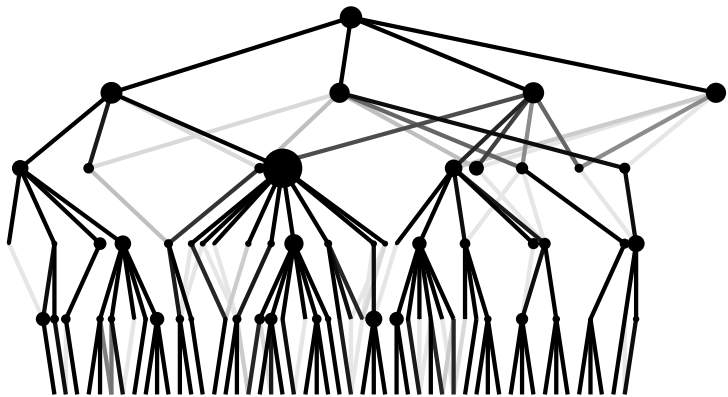}
		\caption{Realization of the network model with parameters $a = 3$, $\a = 3/2$ and $\g = 1/5$. Node diameters represent $\log$-fitness values. Grayscales correspond to edge weights after 20 reinforcement steps.}
		\label{fig}
\end{figure}

Having introduced the weight dynamics, we now extract the subgraph of relevant edges. More precisely, we let 
$$\EE = \{e \in E:\,\liminf_{t \to \infty} W_t(e) / t > 0\}$$
denote the subgraph consisting of edges that are reinforced a positive proportion of times. 

The main result of this work establishes that $\EE$ is a small-world graph in the sense that graph distances on $\EE$ between layer-0 nodes grow logarithmically in their horizontal distance. To be more precise, by translation-invariance in the first coordinate, we may fix one of the vertices to be $(0, 0)$ and therefore let $H_N$ denote the graph distance on $\EE$ between $(0, 0)$ and $(N, 0)$.

\begin{theorem}[Typical distances; multiplicative version]
	\label{thm1}
	Let $\aa, \a > 1$ and $\g < 1$. Then, asymptotically almost surely,
	$$ \frac{H_N}{\log_a(N)} \xrightarrow{N \to \infty} 2.$$
\end{theorem}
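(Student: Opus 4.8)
\emph{Step 1: identify $\EE$ and reduce to ray coalescence.}
The plan is to first determine the limiting graph. Since the weight of an edge $((k,h),(\ell,h+1))$ changes only when the clock at $(k,h)$ rings, the dynamics decouple into one generalised P\'olya urn per vertex $v$ — colour set $\NN_v$, unit initial weights, fitness multipliers $F_w$, superlinear exponent $\a>1$ — which are independent given the fitnesses. As $\sum_n n^{-\a}<\infty$, the exponential embedding (Rubin's construction) shows that a.s.\ a single out-edge $e^\star(v)$ is selected at all but finitely many rings of $v$, so $W_t(e^\star(v))/t$ converges to the (positive) clock rate while $W_t(e)/t\to0$ for every other out-edge of $v$; hence $\EE=\{e^\star(v):v\in V\}$. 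Thus in $\EE$ every vertex has exactly one parent, one layer up, and since all $\EE$-edges raise the layer, $\EE$ has no undirected cycle (a lowest vertex of a cycle would need two out-edges), so $\EE$ is a forest. Consequently, when $(0,0)$ and $(N,0)$ lie in the same tree, the $\EE$-path between them is the upward ray from $(0,0)$ concatenated with the reversed upward ray from $(N,0)$, joined at the smallest layer $M_N$ at which the two rays meet; so $H_N=2M_N$, and the theorem reduces to showing $M_N/\log_\aa N\to1$ a.a.s.\ (in particular that the two rays a.a.s.\ do meet).

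\emph{Step 2: the rays as a hierarchical walk, and the easy lower bound.}
Write the two rays as $(x_j,j)_j$ with $x_0=0$ and $(x_j',j)_j$ with $x_0'=N$. Because the fitnesses are i.i.d., hence exchangeable, the winner of a fixed vertex's urn is — after averaging over the fitnesses and that vertex's urn randomness — uniform over $\NN_v$; and the data driving the step out of layer $j$ (the layer-$(j{+}1)$ fitnesses and the urn clocks of layer-$j$ vertices) are disjoint from the data that fix the positions up to layer $j$. Hence $((x_j,x_j'))_j$ is a Markov chain whose increments are, conditionally on the past, uniform on $\{-\lfloor\aa^j\rfloor,\dots,\lfloor\aa^j\rfloor\}$, mutually independent when the two out-neighbourhoods are disjoint, and coupled through their shared fitnesses when those neighbourhoods overlap. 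The lower bound is then immediate and deterministic: $|x_h|,|x_h'-N|<\aa^h/(\aa-1)$, so $x_h=x_h'$ forces $\aa^h>N(\aa-1)/2$; thus $M_N\ge\log_\aa N-O(1)$ always and $\liminf M_N/\log_\aa N\ge1$.

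\emph{Step 3: the upper bound.}
I would track $\rho_h:=|x_h'-x_h|/\aa^h$. Symmetry of the increments makes $|x_h'-x_h|$ a martingale, so $\E[\rho_{h+1}\mid\mathcal{F}_h]=\rho_h/\aa$, and $\rho_{h_0}\le 1+2/(\aa-1)$ deterministically for $h_0:=\lceil\log_\aa N\rceil$; hence $\E[\rho_{h_0+m}]\le C\aa^{-m}$, so $\sum_m\P(\rho_{h_0+m}\ge3/2)<\infty$ uniformly in $N$, and Markov's inequality shows that, with probability $\ge1-\delta$, at most $O_\delta(1)$ of the layers in $[h_0,h_1]$, where $h_1:=h_0+\lceil\e\log_\aa N\rceil$, satisfy $\rho_h\ge3/2$. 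The key quantitative input is a uniform merging estimate: \emph{whenever $\rho_h<3/2$, the two rays coincide at layer $h+1$ with probability $\ge p_1>0$}. Indeed, on $\{\rho_h<3/2\}$ the overlap of the two out-neighbourhoods is a fixed fraction of each, so with probability $\ge1/9$ the fittest vertex $w^{\star\star}$ of their union lies in the overlap; and — using $\g<1$, so that the sum of the $\asymp\aa^h$ i.i.d.\ heavy-tailed fitnesses in a neighbourhood has the order of its maximum — with a further probability $\ge c(\theta)$, uniformly in $h$, one has $F_{w^{\star\star}}\ge\theta\sum_{\NN}F$ in both neighbourhoods. On that event the urn at $(x_h,h)$ selects $w^{\star\star}$ at every ring with probability at least $\prod_{k\ge1}\bigl(1+(\theta^{-1}-1)k^{-\a}\bigr)^{-1}>0$, the product converging precisely because $\a>1$, and likewise, independently given the fitnesses, for the urn at $(x_h',h)$; so both rays move to $w^{\star\star}$ with probability $\ge p_1$. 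Combining the two ingredients: either the rays have met by layer $h_1$, or at least $\tfrac12\e\log_\aa N$ of the remaining layers have $\rho_h<3/2$ before they meet, and at each of these — conditioning sequentially and using the fresh-randomness property — the chance of meeting is $\ge p_1$; so the probability they never meet is at most $\delta+(1-p_1)^{\e\log_\aa N/2}$. Letting $\e,\delta\downarrow0$ gives $\limsup M_N/\log_\aa N\le1$ a.a.s., which with the lower bound yields $H_N/\log_\aa N\to2$.

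\emph{Expected main obstacle.}
The hard part is the uniform-in-$h$ merging bound $p_1>0$: it must combine, on one and the same batch of fitnesses, the heavy-tail phenomenon ($\g<1$: the maximal fitness is a positive fraction of a whole neighbourhood's total, \emph{uniformly in the neighbourhood's size}) with the superlinear-urn phenomenon ($\a>1$: a colour holding a positive fraction of the weight is retained forever with positive probability). Making the first of these quantitative and size-independent, and feeding it into the two-ray coupling to extract a single constant $p_1$, is where the real work lies; the forest structure, the deterministic lower bound and the martingale control of $\rho_h$ are comparatively soft. One should also verify the localisation of the fitness-weighted superlinear urn carefully, presumably the ``reinforcement converges'' statement underlying the model.
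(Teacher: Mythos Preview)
Your Steps 1 and 2 match the paper essentially verbatim: the forest structure via Rubin's theorem, the coalescence representation $H_N=2M_N$, and the deterministic lower bound from the geometric series are exactly the paper's Lemmas~3.1--3.2. The one-step merging bound in Step~3 (when $\rho_h<3/2$: the fittest vertex of the union lies in the overlap with positive probability, the heavy tail $\g<1$ makes max/sum tight uniformly in the neighbourhood size, and $\a>1$ makes $\prod_{k\ge1}(1+ck^{-\a})^{-1}$ positive) is likewise the paper's Lemmas~3.3--3.4.

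There is, however, a genuine gap in Step~3. The claim that $|x_h'-x_h|$ is a martingale is false: symmetry of the increments makes the \emph{signed} difference $D_h=x_h'-x_h$ a martingale, but $|D_h|$ is then only a submartingale, so one has $\E[\rho_{h+1}\mid\mc F_h]\ge\rho_h/\aa$, the wrong inequality. Worse, the asserted bound $\E[\rho_{h_0+m}]\le C\aa^{-m}$ is not merely unproven but untrue: the step $D_{h+1}-D_h$ has standard deviation of order $\aa^h$, so $\rho_h$ remains of order~$1$ in $L^2$ and does not tend to zero. For $\aa$ close to~$1$ the deterministic bound $\rho_h\le \aa^{-m}+2/(\aa-1)$ is also useless (the constant $2/(\aa-1)$ exceeds~$2$, so the out-neighbourhoods of $(L_h,h)$ and $(R_h,h)$ need not overlap at all), and you are left with no mechanism to produce layers with $\rho_h<3/2$.

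The paper's fix for small $\aa$ is different in spirit: rather than arguing that $\rho_h$ is \emph{typically} small, it shows that from \emph{any} configuration with $|D_h|\le 4\aa^h/(\aa-1)$ (which holds deterministically for $h\ge\log_\aa N$) coalescence occurs within a bounded number $k=k(\aa)$ of layers with probability bounded below. The idea is to require, for several consecutive layers, that both $L_{h'}^{\mx}$ and $R_{h'}^{\mx}$ land within $\e_1\aa^{h'}$ of $L_{h'}$ and $R_{h'}$ respectively---a positive-probability event by the same max/sum tightness and choice-of-fittest ingredients you already have. Under this event $|D_{h'}|$ grows by at most $2\e_1\aa^{h'}$ per step while $\aa^{h'}$ grows by a factor of $\aa$, so after $O_\aa(1)$ steps $\rho_{h'}\le 1$ and your one-step merge applies. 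This ``stand still until the range catches up'' manoeuvre is the missing idea.
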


\begin{theorem}[Typical distances; additive version]
	\label{thm2}
Let $\aa, \a > 1$ and $\g < 1$. Then, there exists $c > 0$ such that for every $N, x \ge 1$
	$$\P(H_N \ge 2 \log_a(N) + x) \le \exp(-cx).$$
	In particular, $H_N$ is almost surely finite for every $N \ge 1$.
\end{theorem}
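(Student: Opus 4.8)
First I would describe $\EE$ explicitly. Only the out‑edges of a vertex $v$ are ever reinforced, so the weights $(W_t(v,w))_{w\in\NN_v}$ form a generalized P\'olya urn with super‑linear feedback $k\mapsto F_wk^{\a}$; since $\a>1$, a P\'olya‑urn analysis based on Rubin's exponential embedding shows that a.s.\ exactly one out‑edge of $v$ satisfies $\liminf_tW_t/t>0$, namely the edge to
\[\pi(v):=\arg\max_{w\in\NN_v}\frac{F_w}{S_{(v,w)}},\]
where the $S_{(v,w)}$, $(v,w)\in E$, are i.i.d.\ with the law of $\sum_{k\ge1}\mathrm{Exp}(k^{\a})$ (a.s.\ finite because $\a>1$) and independent of the fitnesses. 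Hence $\EE=\{(v,\pi(v)):v\in V\}$ is a forest all of whose edges point one layer upward, so every vertex has an ancestral line $v,\pi(v),\pi^{(2)}(v),\dots$, and the $\EE$‑distance between two layer‑$0$ vertices equals twice the first layer at which their ancestral lines meet (infinite if they never do). Writing the lines of $(0,0)$ and $(N,0)$ as $u_h=(p_h,h)$, $u_h'=(q_h,h)$ and $T_N:=\inf\{h\ge0:p_h=q_h\}$, we get $H_N=2T_N$, so it suffices to find $c>0$ with $\P(T_N\ge\log_{\aa}(N)+y)\le e^{-cy}$ for all $y\ge1$; the a.s.\ finiteness of $H_N$ then follows on letting $y\to\infty$.

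\textbf{Scale control and renewal structure.} The ancestral lines below layer $h$ are measurable with respect to the data of layers $\le h$, which is independent of the data of layers $>h$; this gives a strong‑Markov/renewal structure in the layer index. With $D_h:=|p_h-q_h|$ and $R_h:=\aa^{-h}D_h$, the bound $|p_{h+1}-p_h|\le\aa^h$ (and likewise for $q$) forces $R_{h+1}\le(R_h+2)/\aa$, hence deterministically $R_h\le M:=\tfrac{\aa+1}{\aa-1}$ for all $h\ge h_0:=\lceil\log_{\aa}N\rceil$; so from layer $h_0$ on the two lines stay within $M\aa^h$ of each other.

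\textbf{One‑layer coalescence.} The heart of the proof is that, once the layer‑$(h+1)$ windows $A:=\{\ell:|\ell-p_h|\le\aa^h\}$ and $B:=\{\ell:|\ell-q_h|\le\aa^h\}$ overlap in a constant fraction, the two lines coalesce at layer $h+1$ with probability bounded below \emph{uniformly in the window size}. Conditioning on the layer‑$(h+1)$ fitnesses $(F_w)_w$, the parents $\pi(u_h)$ and $\pi(u_h')$ are conditionally independent (their urns use disjoint $S$‑variables) with laws $\mu_A,\mu_B$ on $A,B$ — each uniform in its marginal, by exchangeability — so the conditional one‑step coalescence probability equals $\E\big[\sum_{w\in A\cap B}\mu_A(w)\mu_B(w)\big]$. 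Since the fitnesses are heavy‑tailed (the $O(1)$ largest of any $n$ of them are of order $n^{1/\g}$ and dominate the rest) while $S$ has an extremely light left tail, an extreme‑value analysis shows that $\mu_A(w)$ is, up to constants, $\P(S\le F_w|A|^{-1/\g})$: of order $1$ precisely on the $O(1)$ heaviest sites of $A$, and negligible elsewhere. Hence $\E[\mu_A(w)^2]\asymp\E[\mu_A(w)]=1/|A|$, and as $\mu_A(w),\mu_B(w)$ are comparable for $w$ in the overlap (same $F_w$, equal window sizes), $\E\big[\sum_{w\in A\cap B}\mu_A(w)\mu_B(w)\big]\gtrsim|A\cap B|/|A|$, bounded below as soon as $R_h$ is bounded away from $2$. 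Combining this with the scale bound and with the fact that, while the windows are disjoint, the increments are uniform on their windows and independent (so $R_h$ performs a contractive step with probability bounded below), one obtains: from \emph{every} layer $h\ge h_0$, whatever the past, the two lines coalesce within $K=K(\aa)$ layers with probability at least some $\delta=\delta(\aa,\a,\g)>0$.

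\textbf{Conclusion, and the expected obstacle.} By the renewal structure, the number of length‑$K$ blocks needed for coalescence is stochastically dominated by a geometric variable of parameter $\delta$, so $\P(T_N\ge h_0+jK)\le(1-\delta)^j$; with $h_0\le\log_{\aa}(N)+1$ this yields $\P(T_N\ge\log_{\aa}(N)+y)\le e^{-cy}$ after absorbing the fixed layer‑offset into $c$, which is Theorem~\ref{thm2}. The step I expect to be genuinely hard is the one‑layer coalescence bound: the parent of $v$ is the argmax of $F_w/S_{(v,w)}$ rather than of $F_w$, so one must rule out that the independent multiplicative weights $(S_{(v,w)})_w$ drive the two argmaxes onto different sites of the overlap — precisely the point where the heavy‑tailed fitness assumption is used, since it forces both argmaxes to select among the same $O(1)$ dominant fitnesses. (For light tails the per‑layer coalescence probability would decay, and one would expect $\EE$ to split into several trees, so $H_N=\infty$.)
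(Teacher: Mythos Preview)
Your global architecture matches the paper's exactly: forest structure via Rubin, the identity $H_N=2T_N$, the deterministic scale control $R_h\le M$ for $h\ge\lceil\log_\aa N\rceil$, a uniform lower bound on the probability of coalescing within $K$ layers, and then a geometric block argument. The paper also splits into the case $\aa\ge3$ (where the windows overlap immediately) and $\aa<3$ (where one first forces a few ``small-increment'' steps, exactly your contractive-$R_h$ mechanism).

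The genuine difference is in how the one-layer coalescence bound is obtained. You pass through the explicit Rubin representation $\pi(v)=\arg\max_w F_w/S_{(v,w)}$ and try to control $\E\big[\sum_{w\in A\cap B}\mu_A(w)\mu_B(w)\big]$ via extreme-value heuristics. The paper bypasses the explicit representation entirely: its Lemma~3.3 shows, by a direct product bound on the urn steps, that conditionally on the fitnesses one has $\P(\pi(v)=\fmv)\ge q_\e$ on the event $\{\max_{w\in\NN_v}F_w\ge\e\sum_{w\in\NN_v}F_w\}$, and Lemma~3.4 (tightness of sum over max for heavy tails) makes that event uniformly likely. Coalescence then follows because, with probability $\ge |A\cap B|/|A\cup B|$, the overall fittest in $A\cup B$ sits in $A\cap B$, and then \emph{both} vertices select it with probability $\ge q_\e^2$. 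This is cleaner than your route and avoids the step you flag as hard.

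On that step, your heuristic has a soft spot: the claim that $\mu_A(w)$ and $\mu_B(w)$ are ``comparable for $w$ in the overlap'' is not justified as stated, since $\mu_A(w)$ depends on the competing fitnesses in $A\setminus B$ and $\mu_B(w)$ on those in $B\setminus A$, which are independent and can differ by arbitrary factors. What actually rescues the bound is precisely the paper's mechanism: restrict to the event that the single largest fitness in $A\cup B$ lies in $A\cap B$ and dominates the whole sum; on that event both $\mu_A$ and $\mu_B$ put mass $\ge q_\e$ on the same site, and the event itself has probability $\asymp|A\cap B|/|A|$. So your framework is salvageable, but the shortest rigorous path through it is exactly the paper's Lemma~3.3, and the explicit $\arg\max F_w/S_{(v,w)}$ description, while correct, is never actually needed.
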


The almost sure finiteness of $H_N$ for all $N \ge 1$ means that all nodes at layer 0 are connected in $\EE$. In fact, all other nodes are connected as well. Since $\EE$ does not contain loops (Lemma~\ref{forLem}), it is therefore a tree.


\section{Proofs}
First, in Section \ref{lowSec}, we establish the lower bound of Theorem \ref{thm1}. The main step is to show that the relevant edges $\EE$ form a forest. That is, with probability 1, every node has precisely one outgoing edge in $\EE$. The argument critically relies on the assumption of strong reinforcement, where $\a > 1$. 

Next, the additive upper bound in Theorem \ref{thm2} is stronger than the multiplicative upper bound in Theorem \ref{thm1}, so that it suffices to establish the former. To achieve this goal, in Section \ref{up1Sec}, we first give a short and instructive proof for $\aa \ge 3$.  The heavy-tailedness of the fitness distribution ensures that although the number of possible connections from each node grows exponentially in the layer, with positive probability, $\EE$ contains the edge leading to the node with maximal fitness in the next layer. Then, in Section \ref{up2Sec}, we work out the more subtle arguments for general $\aa > 1$.

\subsection{Lower bound}
\label{lowSec}
The main step in the lower bound is to prove that $\EE$ is a forest. Essentially, this follows from a variant of the celebrated Rubin's theorem for  P\'olya urns in the regime of strong reinforcement.

%
%
\begin{lemma}[$\EE$ is a forest]
	\label{forLem}
	With probability 1, $\EE$ is a forest.
\end{lemma}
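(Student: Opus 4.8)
\subsection*{Proof proposal}

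The plan is to prove the stronger statement that, almost surely, every node $v \in V$ has \emph{exactly one} outgoing edge in $\EE$; the forest property is then immediate from the layered structure of the base graph. The starting observation is that the weight process restricted to the out-edges of a fixed node $v = (k,h)$ is autonomous: an edge $(v,w)$ with $w \in \NN_v$ is incremented only when the Poisson clock at $v$ rings and selects $w$, and the probability of that selection depends solely on $(F_{w'})_{w' \in \NN_v}$ and the current out-weights $(W_{t-}(v,w'))_{w' \in \NN_v}$. Hence, conditionally on the fitnesses, the sequence of edges selected at $v$ forms a finite-colour reinforced P\'olya urn with $|\NN_v| = 2\lfloor \aa^h\rfloor + 1$ colours, initial weights $F_w$, and attractiveness $n \mapsto F_w(n+1)^\a$ for colour $w$ after $n$ prior selections; moreover, the urns attached to distinct nodes are independent.

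The first real step is to apply the strong-reinforcement form of Rubin's theorem to this urn. Since $\a > 1$ and the fitnesses are positive and finite almost surely, for every colour $w$ we have $\sum_{n \ge 0}\bigl(F_w(n+1)^\a\bigr)^{-1} = F_w^{-1}\sum_{m \ge 1}m^{-\a} < \infty$; running the colours as competing pure-birth clocks with $\mathrm{Exp}(F_w(n+1)^\a)$ holding times, each clock therefore explodes in finite (expected) time, and the usual comparison argument shows that almost surely exactly one out-edge $e^\star = e^\star(v)$ is selected infinitely often, while every other out-edge of $v$ is selected only finitely often. (As this is standard, I would either cite Rubin's theorem directly or include the short exponential-embedding proof.) It then remains to read this off in terms of $\EE$: the clock at $v$ is a rate-$1$ Poisson process, so its count $N_v(t)$ satisfies $N_v(t)/t \to 1$ almost surely; since $W_t(v,w) = 1 + \#\{\text{rings of } v \text{ up to } t \text{ that select } w\}$ and every out-edge other than $e^\star$ collects only finitely many increments, we get $W_t(e^\star)/t \to 1$ and $W_t(e)/t \to 0$ for each other out-edge $e$ of $v$. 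Thus $e^\star \in \EE$ and no other edge leaving $v$ lies in $\EE$. Intersecting over the countably many nodes $v$ and integrating out the fitnesses (tower property / Fubini), almost surely every node has out-degree exactly $1$ in $\EE$.

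Finally, suppose $\EE$, viewed as an undirected graph, contained a cycle $v_0 - v_1 - \dots - v_{m-1} - v_0$, and write $h_i$ for the layer of $v_i$. Every edge of the base graph joins consecutive layers, so $|h_{i+1} - h_i| = 1$ for all $i$ modulo $m$; in particular, at an index $i$ where $h_i$ is minimal along the cycle, both cyclic neighbours satisfy $h_{i-1} = h_{i+1} = h_i + 1$. Then both $\{v_{i-1}, v_i\}$ and $\{v_i, v_{i+1}\}$ run from layer $h_i$ to layer $h_i+1$ and are hence two distinct \emph{outgoing} edges of $v_i$ in $\EE$, contradicting that $v_i$ has out-degree $1$. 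Therefore $\EE$ is acyclic, which is the assertion.

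I expect the main obstacle to be the middle step: making the strong-reinforcement dichotomy (a unique persistent out-edge, every other out-edge bounded) rigorous in a form valid for every node. Classical Rubin handles finitely many colours, which suffices node by node even though $|\NN_v|$ grows exponentially in the layer $h$; the points that genuinely need care are the colour-dependent attractiveness functions $n \mapsto F_w(n+1)^\a$ and the assembly of a single almost-sure event simultaneously over all nodes and over the fitness randomness — routine, but worth spelling out.
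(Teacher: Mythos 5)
Your proposal is correct and takes essentially the same route as the paper: reduce to a node-by-node finite-colour P\'olya urn with super-linear reinforcement (driven by the Poisson clock at that node), invoke Rubin's theorem with the exponential-embedding argument to conclude that exactly one out-edge is reinforced infinitely often, then aggregate over countably many nodes. The paper's proof is one short paragraph that cites the two-colour case in Pemantle and the finite-colour generalization in Zhu and leaves the remaining bookkeeping implicit; you usefully spell out two steps the paper suppresses — that the unique persistent out-edge actually satisfies $W_t(e^\star)/t\to 1$ (so it lands in $\EE$, not merely that the others fall out), and that out-degree one plus the bipartite layer structure forbids cycles via the minimal-layer-vertex argument — but these are refinements of detail, not a different strategy.
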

\begin{proof}
	The critical observation is that the outgoing edges adjacent to a node $v$ are only reinforced at Poisson clock rings at the vertex $v$. Hence, we may view these edges as colors in a P\'olya urn governed by a super-linear reinforcement scheme. Then, by the celebrated Rubin's theorem, almost surely all but one of the edges are reinforced only finitely often. For two colors, this is shown in \cite[Theorem 3.6]{pemantle}, and a generalization to an arbitrary finite number can be found in \cite[Theorem 3.3.1]{zhu}.
\end{proof}

%
%
Hence, in each layer $h \ge 0$, there exists almost surely a unique node $(L_h, h)$ such that $(0, 0)$ connects to $(L_h, h)$ by a directed path in $\EE$. Similarly, we write $(R_h, h)$ for the unique node in layer $h$ connected along a directed path to $(N, 0)$. In this notation, $H_N$ is twice the coalescence time of $L_h$ and $R_h$, i.e.,
\begin{align}
	\label{coalRepEq}
	H_N = 2 \inf\{h \ge 1:\, L_h = R_h\}.
\end{align}

%
%
Now, the lower bound becomes a consequence of the structure of the underlying deterministic graph $(V, E)$. More precisely, we first establish an auxiliary result on the growth of the difference $D_h = R_h - L_h$. 
\begin{lemma}[Growth of $D_h$]
	\label{distLem}
	Let $h, h' \ge 0$. Then,
$$		|D_{h + h'} - D_h| \le 2 \sum_{h \le i < h + h' } \aa^i \le \frac{2 \aa^{h + h'}}{\aa - 1}.$$
\end{lemma}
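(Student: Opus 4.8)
The plan is to exploit the deterministic constraint built into the edge set $E$: an edge of $E$ may join $(k, i)$ to $(\ell, i+1)$ only when $|\ell - k| \le \aa^i$. By Lemma~\ref{forLem}, $\EE$ is a forest, so that each of the nodes $(0,0)$ and $(N,0)$ has a unique directed ray through $\EE$, passing through $(L_i, i)$ and $(R_i, i)$ respectively in layer $i$. Consecutive vertices on such a ray are joined by an edge of $E$, whence $|L_{i+1} - L_i| \le \aa^i$ and $|R_{i+1} - R_i| \le \aa^i$ for every $i \ge 0$. It is crucial to invoke Lemma~\ref{forLem} first, so that $L_i$ and $R_i$ are genuinely well-defined single vertices rather than sets; after that, everything is deterministic and combinatorial.

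Next I would telescope. Writing $R_{h+h'} - R_h = \sum_{h \le i < h+h'}(R_{i+1} - R_i)$ and likewise for $L$, the triangle inequality gives
$$|R_{h+h'} - R_h| \le \sum_{h \le i < h + h'} \aa^i, \qquad |L_{h+h'} - L_h| \le \sum_{h \le i < h + h'} \aa^i.$$
Since $D_{h+h'} - D_h = (R_{h+h'} - R_h) - (L_{h+h'} - L_h)$, a further application of the triangle inequality yields the first claimed bound $|D_{h+h'} - D_h| \le 2 \sum_{h \le i < h+h'} \aa^i$.

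For the second inequality I would simply sum the geometric series, using $\aa > 1$: $\sum_{h \le i < h + h'} \aa^i = \aa^h \tfrac{\aa^{h'} - 1}{\aa - 1} \le \tfrac{\aa^{h+h'}}{\aa - 1}$. Multiplying by $2$ gives $\tfrac{2 \aa^{h+h'}}{\aa - 1}$, which completes the proof.

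There is essentially no genuine obstacle here; the one point requiring care is precisely the appeal to Lemma~\ref{forLem} to make the quantities $L_i, R_i$ meaningful, after which the estimate is a routine consequence of the range bound $\aa^i$ on edges emanating from layer $i$ together with a telescoping sum and a geometric series.
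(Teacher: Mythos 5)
Your proof is correct and follows essentially the same route as the paper's: both rest on the range bound $|L_{i+1}-L_i|,|R_{i+1}-R_i|\le \aa^i$ coming from the definition of $E$, a telescoping sum (you telescope $L$ and $R$ separately, the paper telescopes $D$ directly—an immaterial difference), and the geometric series bound. Your explicit remark that Lemma~\ref{forLem} is what makes $L_i, R_i$ well-defined is a sound observation the paper handles in the surrounding text rather than inside the proof.
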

\begin{proof}
By definition, any node in layer $i$ can connect to nodes in layer $i + 1$ at horizontal distance at most $\aa^i$, so that for every $i \ge 0$,
	$$\max\{|L_i - L_{i + 1}|, |R_i - R_{i + 1}|\} \le \aa^i.$$
	In particular, 
	$$|D_{h + h'} - D_h| \le \sum_{h \le i < h + h' } |D_{i + 1} - D_i| \le 2 \sum_{h \le i < h + h'} \aa^i$$
	The second inequality in the assertion follows from the geometric series representation.
\end{proof}

Now, we have all ingredients to prove the lower bound in Theorem \ref{thm1}.
\begin{proof}[Proof of Theorem \ref{thm1}, lower bound]
	Since $D_{H_N / 2 } = 0$ and $D_0 = N$, an application of Lemma \ref{distLem} gives that
	$$ \log_\aa(2 / (a - 1)) + H_N / 2 \ge \log_\aa(N),$$
	as asserted. 
\end{proof}

\subsection{Theorem \ref{thm2}; $\aa \ge 3$} 
\label{up1Sec}

Lemma \ref{forLem} produces for every node a unique outgoing edge that is reinforced infinitely often. Leveraging the heavy-tailedness of the fitness distribution, a key ingredient in the proof of the upper bound is that with a probability bounded away from 0, this edge leads to the node with maximal fitness. To make this precise, let $\fmv \in \NN_v$ be the out-neighbor of $v \in V$ with maximal fitness. We let 
$$\FF_h = \sigma(\{L_{h'}, R_{h'}\}_{h' \le h}, \{F_v\}_{v \in \Z \times \{0, \dots, h\}})$$
denote the $\sigma$-algebra of the information gathered up to layer $h$ and write
$$\FF_h^* = \sigma(\{L_{h'}, R_{h'}\}_{h' \le h}, \{F_v\}_{v \in \Z \times \{0, \dots, h + 1\}})$$
for the $\sigma$-algebra that additionally contains the information on the fitnesses in layer $h + 1$.

%
%
\begin{lemma}[Choice of the fittest]
	\label{fitLem}
	For every $\e > 0$ there exists $q_\e > 0$ such that almost surely
	$$ \inf_{\substack{h \ge 0 \\ v \in \Z \times \{h\} }}\P(\{v, \fmv\} \in \EE \,|\, \FF_h^*) \ge q_\e \one\Big\{\max_{w \in \NN_v}F_w \ge \e \sum_{w \in \NN_v}F_w\Big\}.$$
\end{lemma}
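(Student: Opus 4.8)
The plan is to condition on $\FF_h^*$ and compare, within the single-vertex P\'olya urn attached to $v$, the probability that the winning color (the unique edge reinforced infinitely often, which exists by Lemma \ref{forLem}) is the edge $\{v, \fmv\}$. Since the reinforcement at the outgoing edges of $v$ is driven only by the Poisson clock of $v$, by a time-change we may work with a discrete-time generalized P\'olya urn on the color set $\NN_v$ with initial weights all equal to $1$, fitness multipliers $F_w$, and reinforcement exponent $\a > 1$. The first step is therefore to reduce to a clean statement: \emph{in a generalized (super-linear, $\a>1$) P\'olya urn with finitely many colors $w$ having fitness $F_w$, the probability that color $w_0$ is the a.s.\ eventual monopolist is bounded below by some function of $F_{w_0} / \sum_w F_w$ that is bounded away from $0$ whenever $F_{w_0} \ge \e \sum_w F_w$.} Note that the event $\{v,\fmv\}\in\EE$ is precisely the event that the $\fmv$-edge is the monopolist, so this is exactly what is needed; the indicator on the right-hand side restricts to the regime where $\fmv$ carries at least an $\e$-fraction of the total fitness.

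The second step is to prove that urn estimate. Here I would use a Rubin-type exponential embedding: attach to each color $w$ an independent family of exponential clocks so that, after the $j$-th success of color $w$, the next success of that color occurs after an $\mathrm{Exp}\big(F_w (1+j)^\a\big)$ waiting time (rates $F_w n^\a$ for $n=1,2,\dots$). Then color $w$ is the monopolist iff its total clock sum $S_w = \sum_{n\ge1} \mathrm{Exp}(F_w n^\a)$ is strictly smaller than $S_{w'}$ for all other colors $w'$. Because $\a>1$, each $S_w$ is a.s.\ finite, and $S_w \overset{d}{=} S / F_w$ where $S = \sum_{n\ge1}\mathrm{Exp}(n^\a)$ is a fixed a.s.-finite positive random variable with some law not depending on the fitnesses (the $\mathrm{Exp}(n^\a)$ here being independent). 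So I need a lower bound, uniform in the number of colors and in the fitness configuration subject to $F_{w_0}\ge \e\sum_w F_w$, for
$$\P\Big(S_{w_0} < \min_{w \ne w_0} S_w \Big) = \P\Big( \frac{S^{(0)}}{F_{w_0}} < \min_{w\ne w_0} \frac{S^{(w)}}{F_w}\Big),$$
with the $S^{(w)}$ i.i.d.\ copies of $S$. The clean way: pick a threshold $t>0$ with $\P(S \le t) =: p_1 >0$ and $\P(S > t) =: p_2 = 1-p_1 \in (0,1)$ fixed, and lower-bound the event by $\{S^{(0)} \le t\, F_{w_0}\} \cap \bigcap_{w\ne w_0}\{S^{(w)} > t\, F_w\}$. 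This fails if some $F_w$ is huge, so instead I would split off the finitely many ``large'' competitors: since $\sum_{w\ne w_0} F_w \le (1/\e - 1) F_{w_0}$, by Markov at most $1/\e^2$ of the competitors have $F_w \ge \e^2 F_{w_0}$; handle those finitely many by the crude symmetry/independence bound $\P(S_{w_0} < S_w) \ge$ (something $>0$ depending only on $F_{w_0}/F_w \ge \e^2$, via the same threshold trick), and handle the remaining ``small'' competitors $F_w < \e^2 F_{w_0}$ collectively by a union bound: $\P(\exists\, w\ \text{small}: S_w \le S_{w_0}) \le \sum_{w\ \text{small}} \P(S \le (F_w/F_{w_0}) S_{w_0}) $, and on the event $\{S_{w_0}\le t F_{w_0}\}$ this is $\le \sum_w \P(S \le t\,\e^2)\cdot(\text{count})$; choosing $t$ first so that $\P(S\le t\e^2)$ is small enough relative to the number of small competitors — which is unbounded, so this needs care — is the delicate point. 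The fix is to use, instead of a fixed $t$, the value $S_{w_0}$ itself and the tail bound $\P(S\le s) \le C s^{\eta}$ for small $s$ (valid because $S \ge \mathrm{Exp}(1)$ type variables give a polynomial-order lower tail; in fact $S$ dominates a single $\mathrm{Exp}(1)$ so $\P(S\le s)\le s$), giving $\sum_{w\ \text{small}} \P(S \le (F_w/F_{w_0}) S_{w_0}\mid S_{w_0}) \le S_{w_0} \sum_{w\ \text{small}} F_w/F_{w_0} \le S_{w_0}(1/\e - 1)$, then integrating against the law of $S_{w_0}$ restricted to $\{S_{w_0}$ small$\}$. Balancing these gives $q_\e>0$.

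Finally, the third step assembles the pieces: the bound just obtained is a bound on a conditional probability given $\{F_w\}_{w\in\NN_v}$, hence given $\FF_h^*$ (the fitnesses in layer $h+1$ include all of $\NN_v$, and the urn dynamics at $v$ are conditionally independent of everything in $\FF_h^*$ beyond those fitnesses, using that different vertices have independent clocks and the strong-reinforcement monopoly is a tail event of $v$'s urn alone); the infimum over $h\ge0$ and $v\in\Z\times\{h\}$ is harmless because the lower bound $q_\e$ depends only on $\e$ and on the law of $S$, not on $h$ or $v$, so it holds simultaneously a.s. I expect the main obstacle to be exactly the uniformity over an \emph{unbounded} number of low-fitness competitors in the urn estimate — controlling $\P(\text{some weak color beats } w_0)$ by a union bound that does not blow up — which is why the argument must leverage the lower-tail decay of $S$ near $0$ together with the fitness-sum constraint $F_{w_0}\ge\e\sum_w F_w$ rather than a naive fixed-threshold comparison.
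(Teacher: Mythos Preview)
Your approach is correct and will yield the lemma, but it is considerably more elaborate than the paper's argument, which sidesteps the Rubin embedding entirely. The paper simply lower-bounds the event $\{\{v,\fmv\}\in\EE\}$ by the much smaller event that the edge to $\fmv$ is selected at \emph{every single} firing of $v$'s clock. Conditional on $\FF_h^*$ and on the first $n-1$ firings all having gone to $\fmv$, the weight of that edge is $n$ while all competing edges still have weight $1$, so the $n$th firing selects $\fmv$ with probability at least $M_v n^\a / (M_v n^\a + S_v)$, where $M_v=\max_{w\in\NN_v}F_w$ and $S_v=\sum_{w\in\NN_v}F_w$. On $\{M_v\ge\e S_v\}$ this is at least $1-(1+\e n^\a)^{-1}$, and since $\a>1$ the infinite product $q_\e=\prod_{n\ge1}(1-(1+\e n^\a)^{-1})$ is strictly positive. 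That is the whole proof.

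Your route via the exponential embedding and explosion times $S_w\overset{d}{=}S/F_w$ is sound, and the key estimate $\P(S\le s)\le s$ (from $S\ge Y_1\sim\mathrm{Exp}(1)$) does give, by a union bound conditional on $S^{(0)}$, that $\P(\exists\,w\ne w_0:S^{(w)}\le (F_w/F_{w_0})S^{(0)}\mid S^{(0)})\le S^{(0)}\sum_{w\ne w_0}F_w/F_{w_0}\le S^{(0)}(1/\e-1)$. Note that this already handles \emph{all} competitors at once, so your large/small split is unnecessary (indeed, since $w_0=\fmv$ is the maximum, every $F_w/F_{w_0}\le1$, and the fitness-sum constraint caps the total). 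Integrating against $\one\{S^{(0)}\le t\}$ for small $t$ finishes it, once you check $\P(S\le t)>0$ for all $t>0$. So your worry about ``uniformity over an unbounded number of low-fitness competitors'' is exactly what the sum constraint $F_{w_0}\ge\e\sum_w F_w$ resolves, and you identified that correctly; but the paper's product bound gets there in two lines without ever invoking Rubin, explosion times, or lower-tail estimates for $S$. What your approach buys is a sharper picture of the monopoly probability (it is the genuine $\P(\arg\min_w S_w=w_0)$ rather than the crude ``never loses'' sub-event), which could matter if one wanted quantitative dependence on $\e$; for this lemma, however, the paper's shortcut is cleaner.
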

\begin{proof}
	%
	%
	Let $\t_n$ denote the $n$th firing time at the node $v$ and write
	$$E_v^n = \big\{W_{\t_n}(\{v, \fmv\}) = W_{\t_n-}(\{v, \fmv\}) + 1\big\}$$ 
	for the event that at time $\t_n$ the edge $\{v, \fmv\}$ is reinforced. In particular, $\{\{v, \fmv\} \in \EE\} \supset \cap_{n \ge 1} E_v^n$ and  
	\begin{align*}
		\P\Big(E_v^n\,\Big|\FF_h^*, \bigcap_{k \le n - 1} E_v^k\Big) &\ge \frac{F_{\fmv} n^\a}{F_{\fmv} n^\a + \sum_{\substack{w \in \NN_v \setminus\{\fmv\}}}F_w}.
	\end{align*}
	Therefore, putting $M_v =  \max_{w \in \NN_v}F_w$ and $S_v  = \sum_{w \in \NN_v}F_w$, we obtain that almost surely, 
	\begin{align*}
		\P(\{v, \fmv\} \in \EE \,|\,\FF_h^*) &\ge  \prod_{n \ge 1}\frac{M_v n^\a}{M_v n^\a + S_v}.
	\end{align*}
	In particular, 
	\begin{align*}
		\P(\{v, \fmv\} \in \EE \,|\,\FF_h^*) &\ge \one\{M_v \ge \e S_v\} \prod_{n \ge 1}(1 - (1 +\e  n^\a)^{-1}).
	\end{align*}
Since the series $\sum_{n \ge 1}(1 + \e n^\a)^{-1}$ converges, the product $q_\e = \prod_{n \ge 1}(1 - (1 +\e  n^\a)^{-1})$ is strictly positive, as asserted.
\end{proof}

To show that the sum and the maximum of the fitnesses appearing in Lemma \ref{fitLem} are of the same order, we critically rely on the assumption that the fitnesses are heavy-tailed. In order to be applicable both for the case $\aa \ge 3$ as well as for $\aa < 3$, we provide a slightly more refined result, where we compare the second largest value of iid heavy-tailed Pareto random variables to the sum. For this purpose, we write $\max^{(2)}_{i \le m}x_i$ for the second largest value among real numbers $x_1,\dots, x_m$.

%
%
\begin{lemma}[Sum vs.~Second-largest value for Pareto random variables]
	\label{paretoLem}
	Let $\{X_i\}_{i \ge 1}$ be iid Pareto random variables with parameter $\g < 1$. Let $S_m = \sum_{i \le m} X_i$ and let $M_m^{(2)} = \max_{i \le m}^{(2)} X_i$. Then, $\{S_m / M_m^{(2)}\}_{m \ge 1}$ is tight.
\end{lemma}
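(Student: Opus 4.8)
The plan is to use the standard representation $X_i = U_i^{-1/\g}$, up to an irrelevant multiplicative constant, where $\{U_i\}_{i\le m}$ are iid uniform on $(0,1)$. Writing $U_{(1)} < U_{(2)} < \dots < U_{(m)}$ for the increasing order statistics of the uniforms, the $X_i$ in decreasing order are $U_{(1)}^{-1/\g} \ge U_{(2)}^{-1/\g} \ge \dots$, so $M_m^{(2)} = U_{(2)}^{-1/\g}$ and
$$\frac{S_m}{M_m^{(2)}} = \Big(\frac{U_{(2)}}{U_{(1)}}\Big)^{1/\g} + 1 + \frac{1}{M_m^{(2)}}\sum_{k=3}^m U_{(k)}^{-1/\g}.$$
Since a finite family of random variables is tight iff each member is, and the middle term is deterministic, it suffices to show that the first and third summand are each tight.

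For the first summand, the R\'enyi representation of uniform order statistics gives $U_{(1)}/U_{(2)} \overset{d}{=} E_1/(E_1+E_2)$ with $E_1,E_2$ iid exponential; this law does not depend on $m$, hence $(U_{(2)}/U_{(1)})^{1/\g}$ has an $m$-independent distribution (in fact Pareto with parameter $\g$) and is trivially tight.

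For the third summand I condition on $U_{(2)} = u$. A standard property of uniform order statistics is that, given $U_{(2)} = u$, the variables $U_{(3)},\dots,U_{(m)}$ are distributed as the order statistics of $m-2$ iid uniforms on $(u,1)$; hence
$$\E\Big[\frac{1}{M_m^{(2)}}\sum_{k=3}^m U_{(k)}^{-1/\g}\,\Big|\,U_{(2)}=u\Big] = (m-2)\,u^{1/\g}\int_u^1\frac{v^{-1/\g}}{1-u}\,dv.$$
Here the hypothesis $\g<1$ enters decisively: since $1-1/\g<0$ we get $\int_u^1 v^{-1/\g}\,dv \le \frac{\g}{1-\g}\,u^{1-1/\g}$, so the conditional expectation above is at most $\frac{\g}{1-\g}\cdot\frac{(m-2)u}{1-u}$. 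Using the density $m(m-1)u(1-u)^{m-2}$ of $U_{(2)}$ together with a Beta-function computation, $\E[U_{(2)}/(1-U_{(2)})] = \tfrac{2}{m-2}$ for $m\ge 4$, whence
$$\E\Big[\frac{1}{M_m^{(2)}}\sum_{k=3}^m U_{(k)}^{-1/\g}\Big] \le \frac{2\g}{1-\g}$$
uniformly in $m$. Markov's inequality then gives tightness of the third summand, and since the finitely many small values of $m$ are harmless, combining the three bounds finishes the proof.

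The main obstacle is precisely the third summand: a priori it is a sum of $m-2$ terms, and the crude bound $\sum_{k\ge 3}U_{(k)}^{-1/\g} \le (m-2)U_{(3)}^{-1/\g}$ is off by a factor of order $m$. The point is that the bulk of the order statistics $U_{(3)},\dots,U_{(m)}$ sits at scale $1$, so their $(-1/\g)$-th powers are summable in expectation once $1/\g>1$; the conditional computation above is exactly what makes this quantitative. For $\g\ge 1$ the statement is in fact false, consistent with the sum then no longer being carried by a few extreme values.
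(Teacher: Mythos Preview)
Your proof is correct and takes a genuinely different route from the paper. The paper argues at the level of limit theorems: by the stable limit theorem $m^{-1/\g}S_m$ converges in distribution to a positive stable law, while by extreme-value theory $m^{-1/\g}M_m^{(2)}$ converges to a non-degenerate positive limit; combining these two facts immediately yields tightness of the ratio. Your argument instead works entirely at the level of order statistics: you split $S_m/M_m^{(2)}$ into the top ratio $(U_{(2)}/U_{(1)})^{1/\g}$, whose law is $m$-independent by the R\'enyi representation, and a bulk term whose expectation you bound uniformly via the conditional-uniform property and a Beta computation. The paper's approach is shorter and situates the result conceptually (both $S_m$ and $M_m^{(2)}$ live on the scale $m^{1/\g}$), but it imports two nontrivial limit theorems as black boxes. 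Your approach is longer but fully self-contained and in fact yields the quantitative bound $\E\big[(S_m - M_m - M_m^{(2)})/M_m^{(2)}\big]\le 2\g/(1-\g)$, which is slightly more than what tightness alone gives. One small remark: your Beta computation $\E[U_{(2)}/(1-U_{(2)})]=2/(m-2)$ is already valid for $m\ge 3$, not only $m\ge 4$.
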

Mind that Lemma \ref{paretoLem} implies readily that $\{S_m / \max_{i \le m}X_i\}_{m \ge 1}$ is tight as well.
\begin{proof}
	First, by the stable limit theorem, the scaled sum $m^{-1/\g}S_m$ converges to a stable distribution \cite[Theorem XVII.5.3]{feller}. Second, by extremal value theory, the scaled maximum $m^{-1/\g}\max_{i \le m}X_i$ converges to a Fr\'echet distribution, whereas the ratio $\max_{i \le m}X_i / M_m^{(2)}$ converges to 1 \cite[Theorem 3.3.7, Example 4.1.11]{extreme}. This yields tightness of $S_m / M_m^{(2)}$.
\end{proof}

Now, we prove Theorem \ref{thm2} for $\aa \ge 3$. The key simplification in the case $\aa \ge 3$ is that for every $h \ge 0$, the set of possible coalescence nodes grows so quickly that the conditional probability of coalescence in step $h + 1$ given the information in $\FF_h$ is bounded away from $0$ uniformly in $h \ge 0$.

\begin{proof}[Proof of Theorem \ref{thm2}, $\aa \ge 3$]
	%
	%
	To prove the result, we first assert that there exists $\delta > 0$ such that
	$$\P(L_{h + 1} = R_{h + 1}\,|\, \FF_h) \ge  \delta$$
holds for every $h \ge \log_a(N) + \log_a(2)$.
	Once this assertion is shown, we obtain that for  $x \ge  2 \log_a(2)$, 
	\begin{align*}
		\P(H_N \ge 2 \log_a(N) + x) &\le \P(L_h \ne R_h \text{ for all $h \le \log_a(N) + x/2$}) \\
		&\le (1 - \delta )^{\lfloor x/2 -  \log_a(2)\rfloor},
	\end{align*}
	which decays exponentially fast in $x$.

	To prove the asserted lower bound,  we first introduce 
	\begin{align}
		\label{cmEq}
		C_h^+ = \NN_{(L_h, h)} \cup  \NN_{(R_h, h)} \quad \text{ and }\quad C_h^- = \NN_{(L_h, h)} \cap  \NN_{(R_h, h)}
	\end{align}
	as the union and intersection of the out-neighborhoods of ${(L_h, h)}$ and ${(R_h, h)}$, respectively.
		Then, 
	$$\{L_{h + 1} = R_{h + 1}\} \supset \{L_h^{\mx} = R_h^{\mx}\} \cap A_h,$$
	where 
	$$A_h = \{L_{h + 1} = L_h^{\mx}\} \cap \{R_{h + 1} = R_h^{\mx}\}.$$
	Therefore, by Lemma \ref{fitLem}, for every $\e > 0$,
	\begin{align*}
		&\P(L_{h + 1} = R_{h + 1}\,|\, \FF_h) \\
		&\quad \ge \P\Big(\one\{L_h^{\mx} = R_h^{\mx}\}\P(A_h\,|\, \FF_h^*)\,\Big|\, \FF_h\Big)\\
		&\quad \ge q_\e^2 \P\Big(\one\{L_h^{\mx} = R_h^{\mx}\}\cap \Big\{\max_{w \in C_h^+}F_w \ge \e \sum_{w \in C_h^+}F_w\Big\}\,\Big|\, \FF_h\Big).
	\end{align*}
	Since the positions $L_h^{\mx}$, $R_h^{\mx}$ of the maximal fitnesses are independent of the value of the sum and the value of the maximum of the relevant fitnesses, we arrive at 
	\begin{align*}
		\P(L_{h + 1} = R_{h + 1}\,|\, \FF_h) 
		 \ge q_\e^2 \P(L_h^{\mx} = R_h^{\mx}\,|\, \FF_h)\P\Big(\Big\{\max_{w \in C_h^+}F_w \ge \e \sum_{w \in C_h^+}F_w\Big\}\,\Big|\, \FF_h\Big).
	\end{align*}
		By Lemma \ref{paretoLem}, the second probability is bounded below by $1/2$ for sufficiently small $\e > 0$.  Hence, it remains to provide a lower bound for $\P(L_h^{\mx} = R_h^{\mx}\,|\, \FF_h)$.

	 We write $P_h \in \Z \times \{h + 1\}$ for the position of the maximal fitness in $C_h^+$, i.e., 
	$$F_{P_h} = \max_{w \in C_h^+}F_w.$$
Then, $P_h$ is uniformly distributed in $C_h^+$, so that 
	$$\P(L_h^{\mx} = R_h^{\mx}\,|\, \FF_h) = \P(P_h \in C_h^-\,|\, \FF_h) = \frac{\#C_h^-}{\#C_h^+} \ge \frac{\#C_h^-}{4 \lfloor a^h\rfloor + 2}.$$
	Finally, to derive a lower bound on $\#C_h^-$, Lemma \ref{distLem} gives that, for every $h \ge \log_a(N) + \log_a(2)$,
	$$|L_h - R_h| \le  N + \frac{2a^h}{a - 1} \le N +  a^h \le \frac32 a^h.$$
	Therefore, 
	$$\#C_h^- \ge 2\lfloor a^h\rfloor - \frac32 a^h \ge \frac14 a^h,$$ 
	which implies the required lower bound.
\end{proof}

\subsection{Theorem \ref{thm2}; $\aa < 3$} 
\label{up2Sec}
After having developed the intuition behind the proof of Theorem \ref{thm2} for $\aa \ge 3$, we now assume that $\aa < 3$. The arguments in this case are more involved since it may happen that $L_h$ and $R_h$ are so far away that the set $C_h^-$ of possible coalescence points from \eqref{cmEq} becomes empty.  We deal with this problem by imposing that $L_h$ and $R_h$ both do not move substantially for a finite number of steps, which guarantees that the set of possible coalescence points becomes non-empty again.

We start by showing that coalescence occurs with positive probability after a small number of steps if initially $L_h$ and $R_h$ are not too far apart.
\begin{lemma}[$H_N$ is small with positive probability]
	\label{boundPosLem}
There exists $k \ge 1$ such that
	\begin{align*}
		\inf_{\substack{h, N\ge 0 \\ z:\, |z| \le 4\aa^h / (\aa - 1)}} \P(L_{h'} = R_{h'}\text{ for some $ h' \in  [h,   h + k - 1]$}\, |\, L_h - R_h = z) > 0.
	\end{align*}
\end{lemma}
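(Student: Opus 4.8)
The plan is to condition on the full fitness configuration $\{F_v\}_{v\in V}$, which decouples the reinforcement dynamics at distinct nodes. Indeed, for fixed $v$ the event $E_v$ that the unique $\EE$-edge out of $v$ (Lemma~\ref{forLem}) leads to the fittest out-neighbour $\fmv$ depends only on the reinforcement at $v$, so the events $\{E_v\}_{v\in V}$ are conditionally independent given the fitnesses, each of conditional probability at least $q_\e\one\{\max_{w\in\NN_v}F_w\ge\e\sum_{w\in\NN_v}F_w\}$ by the proof of Lemma~\ref{fitLem}. Using translation invariance in the first coordinate I may condition on $L_h=0$ and $R_h=z$. Let $\ell_h=0$ and let $\ell_{h'+1}$ be the (first coordinate of the) fittest out-neighbour of $(\ell_{h'},h')$, and define $r_{h'}$ analogously from $r_h=z$; these \emph{fittest trajectories} are functions of the fitnesses. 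It then suffices to exhibit $k=k(\aa)$ and $p_0>0$ such that, uniformly in $h\ge0$ and $|z|\le 4\aa^h/(\aa-1)$, with probability at least $p_0$ over the fitnesses one has (i)~$\ell_{h'}=r_{h'}$ for some $h'\in[h,h+k-1]$ and (ii)~$\max_{w\in\NN_v}F_w\ge\e\sum_{w\in\NN_v}F_w$ for $v=(\ell_{h'},h')$ and $v=(r_{h'},h')$ at every layer $h'$ before the coalescence in (i). On this event, by (ii), the conditional independence, and the bound in the proof of Lemma~\ref{fitLem}, the paths $(L_{h'})$ and $(R_{h'})$ actually follow $\ell$ and $r$ until they meet with conditional probability at least $q_\e^{2k}$, so the infimum in the lemma is at least $q_\e^{2k}p_0>0$.

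To produce (i)--(ii) I would run the fittest trajectories and track $d_{h'}=r_{h'}-\ell_{h'}$, splitting each layer into two regimes according to the overlap $n_{h'}=\#(\NN_{(\ell_{h'},h')}\cap\NN_{(r_{h'},h')})$: the \emph{close} regime $n_{h'}\ge\tfrac13\#(\NN_{(\ell_{h'},h')}\cup\NN_{(r_{h'},h')})$, which holds whenever $|d_{h'}|\le\lfloor\aa^{h'}\rfloor$, and the \emph{far} regime, which forces $|d_{h'}|\ge\lfloor\aa^{h'}\rfloor+1$. In the close regime, with probability at least $1/3$ the maximal fitness over the union of the two out-neighbourhoods lies in their intersection, whence the two fittest out-neighbours coincide and $\ell_{h'+1}=r_{h'+1}$. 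In the far regime I condition again on the position of that maximum: with probability at least $1/4$ it lies in $\NN_{(\ell_{h'},h')}\setminus\NN_{(r_{h'},h')}$, which fixes $\ell_{h'+1}$, while conditionally the fittest out-neighbour of $(r_{h'},h')$ is still uniform on $\NN_{(r_{h'},h')}$ and hence falls within $\rho\aa^{h'}$ of any prescribed target with probability at least $\rho/6$. Thus with $\rho=(\aa-1)/4$ the far step satisfies $|d_{h'+1}-d_{h'}|\le\rho\aa^{h'}$ with probability bounded below. Finally, in either regime the two events in (ii) hold with probability at least $1-2\eta$ by Lemma~\ref{paretoLem}; fixing $\eta$ small relative to the constants above and then choosing $\e$ (hence $q_\e$) accordingly, one intersects these with the preceding events at negligible cost, yielding a single one-step lower bound $p_*>0$.

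It remains to chain these estimates over the pairwise disjoint blocks of fitnesses they involve. Starting from $|d_h|\le 4\aa^h/(\aa-1)$, as long as the far regime persists one has $|d_{h+i}|\le 4\aa^h/(\aa-1)+\rho\aa^{h+i}/(\aa-1)=\aa^{h+i}\bigl(4\aa^{-i}+\rho\bigr)/(\aa-1)$; since $\rho=(\aa-1)/4$ and $\aa^{-i}\to 0$, for $i=j(\aa)$ large enough this drops below $\lfloor\aa^{h+j}\rfloor$, i.e.\ the close regime, and one further step produces coalescence. Hence $k=j+2$ and $p_0=p_*^{\,k-1}$ work, the successful events at the $\le k-1$ relevant layers using disjoint blocks of fitnesses so that their probabilities multiply. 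I expect the main obstacle to be the far regime in the range $\lfloor\aa^{h'}\rfloor\le|d_{h'}|\le 2\lfloor\aa^{h'}\rfloor$: there the out-neighbourhoods overlap, so the fittest out-neighbours of $(\ell_{h'},h')$ and $(r_{h'},h')$ are correlated and cannot be handled as two independent uniform choices; conditioning on the location of the maximal fitness over the \emph{union} of the two neighbourhoods is precisely the device that circumvents this.
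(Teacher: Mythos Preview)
Your argument is correct and reaches the same conclusion as the paper, but the mechanics differ in two respects worth noting.

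First, the paper never conditions on the full fitness field. It works directly with the filtration $\FF_{h'}$ and bounds $\P(E_{h'}\mid\FF_{h'})$ for the event $E_{h'}$ that either coalescence occurs or \emph{both} increments $|L_{h'+1}-L_{h'}|$ and $|R_{h'+1}-R_{h'}|$ are at most $\e_1 a^{h'}$. To achieve this it looks at the positions $p_{h'},p_{h'}^{(2)}$ of the largest and \emph{second-largest} fitness in $C_{h'}^+$, and asks that $p_{h'}$ be close to $L_{h'}$ and $p_{h'}^{(2)}$ close to $R_{h'}$; this is why Lemma~\ref{paretoLem} is stated for $M_m^{(2)}$ rather than the maximum. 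You instead condition on the fitnesses, pass to the deterministic fittest trajectories, and control only the \emph{difference} $|d_{h'+1}-d_{h'}|$: you fix $\ell_{h'+1}$ by conditioning on the location of the overall maximum in $C_{h'}^+$ and then exploit that, by exchangeability, the argmax over $\NN_{(r_{h'},h')}$ remains uniform. This lets you avoid the second-largest altogether and use only the max version of Lemma~\ref{paretoLem}, handling condition~(ii) by the crude bound $\P(A\cap B)\ge\P(A)-\P(B^c)$.

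Second, your ``disjoint blocks'' phrasing is slightly loose: the one-step event at layer $h'$ is a function of the layer-$(h'+1)$ fitnesses \emph{and} of $(\ell_{h'},r_{h'})$, which are themselves determined by lower layers. What you are really using is that the one-step lower bound $p_*$ holds uniformly over all admissible $(\ell_{h'},r_{h'})$, so a tower/Markov argument gives $p_*^{k-1}$. This is exactly how the paper chains its estimate \eqref{fLowEq}, so the fix is immediate. Overall your route is a little more elementary (no second-largest), while the paper's is marginally cleaner in that it never separates the fitness randomness from the reinforcement.
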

\begin{proof}
	%
	%
	First, if $|z| \le \aa^h$, then $\#C_h^- \ge \aa^h / 2$ for large $h \ge 0$, so that arguing as in Section \ref{up1Sec} yields that
	$$\P(L_{h + 1} = R_{h + 1} \,|\, L_h - R_h = z) \ge \delta_0$$
	for a sufficiently small value of $\delta_0 > 0$.

	%
	%
	Hence, we may assume that $|z| > \aa^h$ and introduce the events 
	\begin{align}
		\label{ehDef}
		E_{h'} =\{L_{h' + 1} = R_{h' + 1}\} \cup \{\max\{|L_{h' + 1} - L_{h'}|, |R_{h' + 1} - R_{h'}|  \} \le \e_1 a^{h'}\},
	\end{align}
	where $\e_1 = (\aa - 1) / 8$. We assert that there exists $\delta > 0$ such that for every $h' \ge h$
		%
	%
	\begin{align}
		\label{fLowEq}
		\P(E_{h'}\,|\, \FF_{h'}) \ge \delta.
	\end{align}
	Before proving \eqref{fLowEq}, we show how to conclude the proof of the lemma.	First, set 
	$$h_1 = \min\big\{h' \ge h:\, |z| + 2\e_1 \sum_{h \le i \le h' - 1} \aa^i \le \frac12{\aa^{h' }}\,\big\}.$$
	In particular,
	\begin{align*}
		|z| &\ge \frac{\aa^{h_1 - 1}}2 - 2\e_1 \sum_{h \le i \le h_1 - 2} \aa^i	= \frac{\aa^{h_1 - 1}}2 - \frac{2\e_1 (\aa^{h_1 - 1} - \aa^h)}{\aa - 1} 		\ge \frac{\aa^{h_1 - 1}}4.
	\end{align*}
	Then, $|z| \le 4\aa^h / (\aa - 1)$ implies that 
		$h_1 - h - 1 \le \log_{\aa}(16 / (\aa - 1)).$
	Note that if $E_{h'} \cap \{L_{h'} \ne R_{h'}\}$ occurs for every $h' \in [h,  \dots, h_1 - 1]$, then 
	\begin{align*}
		|D_{h_1}| \le \Big||D_{h_1}| - |z|\Big| +  |z| \le \frac{2\e_1\aa^{h_1}}{\aa - 1}  + \frac{\aa^{h_1}}2 \le {\aa^{h_1}},
	\end{align*}
	where in the second inequality, we insert the definition of $h_1$ to bound $|z|$. Hence, by the case considered at the beginning of the proof, the conditional probability that $L_{h_1 + 1} = R_{h_1 + 1}$ given that $\cap_{h \le h' < h_1} E_{h'}$ is bounded below by $\delta_0$. Taking everything together, we arrive at the asserted positive lower bound 
	$$\P\Big(\{L_{h_1 + 1} = R_{h_1 + 1}\} \cap \bigcap_{h \le h' < h_1} E_{h'}\,\Big|\, L_h - R_h = z \Big) \ge \delta_0 \delta^{h_1 - h} \ge \delta_0 \delta^{\log_a(16 / (\aa - 1))}.$$

	It remains to establish \eqref{fLowEq}. To that end, we first set as before
	$$A_{h'} =  \{L_{h' + 1} = L_{h'}^{\mx}\} \cap \{R_{h' + 1} = R_{h'}^{\mx}\}.$$
	Then, we let $P_{h'} = (p_{h'}, h' + 1)$ and $P^{(2)}_{h'} = (p_{h'}^{(2)}, h' + 1)$ denote the positions of the largest and the second largest fitness in the union set $C_{h'}^+$ as defined in \eqref{cmEq}. That is, 
	$$F_{P_{h'}} = \max_{w \in C_{h'}^+}F_w\quad\text{ and }\quad F_{P_{h'}^{(2)}} = \max_{w \in C_{h'}^+}{}^{\hspace{-.1cm}(2)}F_{w}.$$
	Now,  define  
	$$E'_{h'} =\big\{\max\{|p_{h'} - L_{h'}|, |p_{h'}^{(2)} - R_{h'}| \} \le \e_1 a^{h'}\big\}$$
	as the event that the distances between $p_{h'}$ and $L_{h'}$, as well as between $p_{h'}^{(2)}$ and $R_{h'}$ are at most $\e_1 a^{h'}$.
	Then, we claim that
	$$E_{h'} \supset E_{h'}' \cap A_{h'}.$$
	Indeed, assume that $E_{h'}'$ occurs. In that case, if $P_{h'}$ is contained in the intersection set $C_{h'}^-$, then $L_{h'}^{\mx} = R_{h'}^{\mx}$. Otherwise, $p_{h'} = L_{h'}^{\mx}$ and $p_{h'}^{(2)} = R_{h'}^{\mx}$, so that $\max\{|L_{h'}^{\mx} - L_{h'}|, |R_{h'}^{\mx} - R_{h'}| \} \le \e_1 a^{h'}$. In particular,
	$$\{L_{h'}^{\mx} = R_{h' }^{\mx}\} \cup \{\max\{|L_{h' }^{\mx} - L_{h'}|, |R_{h' }^{\mx} - R_{h'}|  \} \le \e_1 a^{h'}\}$$ 
	occurs.
	Hence, under $A_{h'}$ the previous line becomes the defining equation for $E_{h'}$ as in \eqref{ehDef}.

	Now, arguing as in the case $a \ge 3$, we derive that for every $\e > 0$,
	\begin{align*}
		\P(E_{h'}\,|\, \FF_h) 
		\ge q_\e^2 \P(E'_{h'}\,|\, \FF_h)\P\Big(\Big\{\max_{w \in C_h^+}{}^{\hspace{-.1cm}(2)}F_w \ge \e \sum_{w \in C_h^+}F_w\Big\}\,\Big|\, \FF_h\Big).
	\end{align*}
	Note that here, we need to consider the second largest value in $C_h^+$ since under $E'_{h'}$ the positions $L_{h'}^{\mx}$ and $R_{h'}^{\mx}$ could be distinct. By Lemma \ref{paretoLem}, it therefore suffices to derive a lower bound on $\P(E'_{h'}\,|\, \FF_h)$.

	Finally, since $p_h$ and $p_{h'}^{(2)}$ are uniform in $C_{h'}^+$, we obtain that for large $h \ge 0$
	$$\P\Big(\max\{|p_{h'} - L_{h'}|, |p_{h'}^{(2)} - R_{h'}| \} \le \e_1 a^{h'}\,|\, \FF_{h'}\Big) \ge \Big(\frac{\e_1 a^{h'}}{4a^{h'} + 2}\Big)^2,$$
	which is bounded away from 0, thereby completing the proof of \eqref{fLowEq}.
\end{proof}

\begin{proof}[Proof of Theorem \ref{thm2}, $a < 3$]
	First, using $1 \le 2 / (a - 1) $ for $a < 3$, Lemma \ref{distLem} implies that for every $h \ge h_0 = \log_a(N)$,
	\begin{align*}
		|D_h| \le N + \frac{2 \aa^h}{\aa - 1} \le  \frac{4 \aa^h}{\aa - 1},
	\end{align*}
so that we are in a position to apply Lemma \ref{boundPosLem}. Choosing $k \ge 1$ as in that lemma, we let
	$$G_i = \{R_h \ne L_h \text{ for every $h \in [ik, i(k + 1) - 1]$}\}$$
	denote the event that we do not see coalescence in the interval $[ik, i(k + 1) - 1]$ and put $G_i' = \cap_{i' \le i}G_{i'}$.
	 In particular, under the event $\{H_N \ge 2 \log_a(N) + x\}$, the event $G'_{i_1}$ occurs 	for $i_1  = \lfloor (\log_a(N) + x/2) / k\rfloor$. Hence, by the Markov property at time $(i_1 - 1)$ and Lemma \ref{boundPosLem}, we have a constant $\e > 0$ such that
	\begin{align*}
		\P( G_{i_1}')
		&\le \E\Big[\P\big(L_h \ne R_h\text{ for every $h \in [(i_1 - 1)k, i_1k - 1]$}\,\big|\,D_{(i_1 - 1)k}\big) \one\{G_{i_1 - 1}'\}\Big]\\
		&\le (1 - \e)\P( G_{i_1 - 1}').
	\end{align*}
	Hence, putting $i_0 = \lceil \log_a(N) / k\rceil$, we conclude that 
	$$\P(H_N \ge 2 \log_a(N) + x) \le (1 - \e)^{i_1 - i_0},$$
	which decays exponentially fast in $x$.  
\end{proof}

\bibliography{lit}

\begin{thebibliography}{10}

\bibitem{BarabasiAlbert99}
A.-L. {Barab{\'a}si} and R.~{Albert}.
\newblock {Emergence of scaling in random networks}.
\newblock {\em Science}, 286:509--512, 1999.

\bibitem{BengioLBL15}
Y.~Bengio, D.~Lee, J.~Bornschein, and Z.~Lin.
\newblock Towards biologically plausible deep learning.
\newblock {\em CoRR}, abs/1502.04156, 2015.

\bibitem{BollobasRiordan04}
B.~{Bollob\'as} and O.~{Riordan}.
\newblock {The diameter of a scale-free random graph.}
\newblock {\em {Combinatorica}}, 24(1):5--34, 2004.

\bibitem{BonacichLiggett03}
P.~{Bonacich} and T.~M. {Liggett}.
\newblock {Asymptotics of a matrix valued Markov chain arising in sociology.}
\newblock {\em {Stochastic Processes Appl.}}, 104(1):155--171, 2003.

\bibitem{warm4}
Y.~Couzini\'e and C.~Hirsch.
\newblock Infinite {WARM} graphs {I}. {W}eak reinforcement regime.
\newblock In preparation.

\bibitem{DelattreFournierHoffmann16}
S.~{Delattre}, N.~{Fournier}, and M.~{Hoffmann}.
\newblock {Hawkes processes on large networks.}
\newblock {\em {Ann. Appl. Probab.}}, 26(1):216--261, 2016.

\bibitem{extreme}
P.~Embrechts, C.~Kl\"{u}ppelberg, and T.~Mikosch.
\newblock {\em Modelling Extremal Events}.
\newblock Springer, Berlin, 1997.

\bibitem{feller}
W.~Feller.
\newblock {\em An Introduction to Probability Theory and its Applications.
  {V}ol. {II}}.
\newblock Second edition. J. Wiley \& Sons, New York, 1971.

\bibitem{warm3}
C.~Hirsch, M.~Holmes, and V.~Kleptsyn.
\newblock Absence of {WARM} percolation in the very strong reinforcement
  regime.
\newblock Preprint available at
  https://christian-hirsch.github.io/publications.html.

\bibitem{warm1}
R.~v.~d. Hofstad, M.~Holmes, A.~Kuznetsov, and W.~Ruszel.
\newblock Strongly reinforced {P}{\'o}lya urns with graph-based competition.
\newblock {\em Ann. Appl. Probab.}, 26(4):2494--2539, 2016.

\bibitem{warm5}
M.~Holmes and V.~Kleptsyn.
\newblock Infinite {WARM} graphs {II}. {C}ritical regime.
\newblock In preparation.

\bibitem{warm2}
M.~Holmes and V.~Kleptsyn.
\newblock Proof of the {WARM} whisker conjecture for neuronal connections.
\newblock {\em Chaos}, 27(4):043104, 2017.

\bibitem{JacobMoerters13}
E.~{Jacob} and P.~{M\"orters}.
\newblock {A spatial preferential attachment model with local clustering.}
\newblock In A.~Bonato, M.~Mitzenmacher, and P.~Pra{\l}at, editors, {\em
  Proceedings of the 10th {I}nternational {W}orkshop ({WAW} 2013) held at
  {H}arvard {U}niversity, {C}ambridge, {MA}, {D}ecember 14--15, 2013}, pages
  14--25. Berlin: Springer, 2013.

\bibitem{kalisman-silberberg-markram-2005}
N.~Kalisman, G.~Silberberg, and H.~Markram.
\newblock The neocortical microcircuit as a tabula rasa.
\newblock {\em Proceedings of the National Academy of Sciences},
  102(3):880--885, 2005.

\bibitem{LiggettRolles04}
T.~M. {Liggett} and S.~W.~W. {Rolles}.
\newblock {An infinite stochastic model of social network formation.}
\newblock {\em {Stochastic Processes Appl.}}, 113(1):65--80, 2004.

\bibitem{pemantle}
R.~Pemantle.
\newblock A survey of random processes with reinforcement.
\newblock {\em Probab. Surv.}, 4:1--79, 2007.

\bibitem{SkyrmsPemantle04}
R.~{Pemantle} and B.~{Skyrms}.
\newblock {Network formation by reinforcement learning: the long and medium
  run.}
\newblock {\em {Math. Soc. Sci.}}, 48(3):315--327, 2004.

\bibitem{SkyrmsPemantle00}
B.~{Skyrms} and R.~{Pemantle}.
\newblock {A dynamic model of social network formation.}
\newblock {\em {Proc. Natl. Acad. Sci. USA}}, 97(16):9340--9346, 2000.

\bibitem{zhu}
T.~Zhu.
\newblock {\em Nonlinear {P}{\'o}lya urn models and self-organizing processes}.
\newblock PhD thesis, University of Pennsylvania, 2009.

\end{thebibliography}
\bibliographystyle{abbrv}

\end{document}